\theoremstyle{theorem}
\newtheorem{theorem}{Theorem}[section]
\newtheorem{corollary}[theorem]{Corollary}
\newtheorem{lemma}[theorem]{Lemma}
\newtheorem{remark}[theorem]{Remark}
\theoremstyle{definition}
\theoremstyle{example}
\newcommand{\boundellipse}[3]
{(#1) ellipse (#2 and #3)
}
\title[]{A Family of Congruences Modulo 7 for Partitions with Monochromatic Even Parts and Multi--Colored Odd Parts}
\author{Michael D. Hirschhorn}
\address{School of Mathematics, UNSW, Sydney, 2052, Australia}
\email{m.hirschhorn@unsw.edu.au}
\author{James A. Sellers}
\address{Department of Mathematics and Statistics, 
University of Minnesota Duluth, Duluth, MN 55812, USA}
\email{jsellers@d.umn.edu}
\subjclass[2020]{Primary 05A17; Secondary 11P83.}
\keywords{partitions, congruences, generating functions}
\date{}
\begin{document}


\maketitle

\begin{abstract}
In recent work, Amdeberhan and Merca considered the integer partition function $a(n)$ which counts the number of integer partitions of weight $n$ wherein even parts come in only one color (i.e., they are monochromatic), while the odd parts may appear in one of three colors.  One of the results that they proved was that, for all $n\geq 0$, $a(7n+2) \equiv 0 \pmod{7}$.  In this work, we generalize this function $a(n)$ by naturally placing it within an infinite family of related partition functions.  Using elementary generating function manipulations and classical $q$--series identities, we then prove infinitely many congruences modulo 7 which are satisfied by members of this family of functions.  
\end{abstract}

\section{Introduction and background}
\label{sec:intro}
A partition of a positive integer $n$ is a finite sequence of positive integers $\lambda = (\lambda_1, \ldots, \lambda_j)$ with $\lambda_1 + \dots + \lambda_j = n$.
The $\lambda_i$, called the parts of $\lambda$, are ordered so that 
\begin{equation}
\label{ordinary_ordering}
\lambda_1 \ge \cdots \ge \lambda_j.
\end{equation}  We denote the number of partitions of $n$ by $p(n)$; for example, the partitions of $n=4$ are 
$$(4), \ \ (3,1), \ \ (2,2), \ \ (2,1,1), \ \ (1,1,1,1),$$
which means $p(4) = 5$.  More information about integer partitions can be found in \cite{AE}.  

Recently, Amdeberhan and Merca \cite{AM} considered the function $a(n)$ which is defined by 
\begin{equation}
\label{genfn1}
\sum_{n=0}^\infty a(n)q^n = \frac{(-q;q)_\infty^2}{(q;q)_\infty}
\end{equation}
where $(A;q)_\infty = (1-A)(1-Aq)(1-Aq^2)(1-Aq^3)\dots $ is the usual $q$--Pochhammer symbol.  It is straightforward to see that \eqref{genfn1} can be rewritten as 
\begin{equation}
\label{genfn2}
\sum_{n=0}^\infty a(n)q^n =  \frac{(-q;q)_\infty^2}{(q;q)_\infty}\cdot \frac{(q;q)_\infty^2}{(q;q)_\infty^2} = \frac{(q^2;q^2)_\infty^2}{(q;q)_\infty^3} = \frac{f_2^2}{f_1^3}
\end{equation}
where we have used the shorthand notation $f_k:=(q^k;q^k)_\infty$.  It is also clear that 
$$
\frac{(q^2;q^2)_\infty^2}{(q;q)_\infty^3} = \frac{1}{(q^2;q^2)_\infty (q;q^2)_\infty^3};
$$ 
from this generating function identity we see that $a(n)$ counts the number of integer partitions of weight $n$ wherein even parts come in only one color (i.e., they are monochromatic), while the odd parts may appear in one of three colors \cite[\href{https://oeis.org/A298311}{A298311}]{OEIS}.  (This interpretation of $a(n)$, along with a few others, appears in the paper of Amdeberhan and Merca \cite{AM}.)  Thus, for example, a part of the form $5_3$ in such a partition means the part has weight 5 and is endowed with the color 3.  Within this context, the ordering of the parts \eqref{ordinary_ordering} mentioned above is no longer sufficient (as it does not take into account the possible colors).  Thus, to construct these ``colored'' partitions, we define the following ordering on colored parts:
$ \alpha_{c_i} > \beta_{c_j}$  exactly when $\alpha > \beta$ or when $ \alpha=\beta$ and $c_i > c_j$.  

As an example, we see that $a(3) = 16$ thanks to the following colored partitions in question:
$$
(3_3), \ \ \ (3_2), \ \ \ (3_1), \ \ \ (2,1_3), \ \ \ (2,1_2), \ \ \ \ (2,1_1),
$$
$$
(1_3,1_3,1_3), \ \ \ (1_3,1_3,1_2), \ \ \ (1_3,1_3,1_1), 
$$
$$
(1_3,1_2,1_2), \ \ \  (1_3,1_2,1_1), \ \ \  (1_3,1_1,1_1), 
$$
$$
(1_2,1_2,1_2), \ \ \  (1_2,1_2,1_1), \ \ \  (1_2,1_1,1_1), \ \ \  (1_1,1_1,1_1)
$$

For the purposes of this work, we highlight one particular arithmetic property which is satisfied by $a(n)$.  
\begin{theorem}[Corollary 7, \cite{AM}] 
\label{a_72mod7}
For all $n\geq 0$, $a(7n+2) \equiv 0 \pmod{7}$.  
\end{theorem}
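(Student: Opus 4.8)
The plan is to extract the arithmetic progression $7n+2$ from the generating function $\sum a(n)q^n = f_2^2/f_1^3$ by means of $7$-dissection. First I would aim to rewrite $1/f_1^3$ in a form amenable to dissection. A natural route is to multiply and divide by a suitable power of $f_7$: since $f_1^7 \equiv f_7 \pmod 7$ (the Frobenius/freshman's-dream identity applied to the product $(q;q)_\infty$), we have $1/f_1^3 \equiv f_1^4/f_7 \pmod 7$, and hence
\begin{equation*}
\sum_{n=0}^\infty a(n)q^n = \frac{f_2^2}{f_1^3} \equiv \frac{f_1^4 f_2^2}{f_7} \pmod 7.
\end{equation*}
The denominator $f_7 = (q^7;q^7)_\infty$ is invariant under extracting residues modulo $7$ in the exponent, so the problem reduces to locating the coefficient of $q^{7n+2}$ in the numerator $f_1^4 f_2^2$ and showing it vanishes modulo $7$.

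Next I would dissect the numerator. The cleanest tool is the classical $7$-dissection of Euler's products; in particular one has explicit $7$-dissections for $f_1$ (via the pentagonal number theorem together with Jacobi-type expansions) and for $f_1^3$ (via Jacobi's identity $f_1^3 = \sum_{k\ge0}(-1)^k(2k+1)q^{k(k+1)/2}$, whose triangular-number exponents fall into known residue classes mod $7$). I would express $f_1^4 f_2^2$ as a product of such dissected pieces, collect the terms whose overall exponent is $\equiv 2 \pmod 7$, and simplify. The expectation is that the coefficient of $q^{7n+2}$ in $f_1^4 f_2^2$ is divisible by $7$ — most likely because every contributing term carries an explicit factor of $7$ coming from the linear factor $(2k+1)$ in Jacobi's identity evaluated at the relevant residue, or from a combinatorial count of representations of $2 \pmod 7$ by the associated quadratic forms that happens to be a multiple of $7$.

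The main obstacle I anticipate is the bookkeeping of the $7$-dissection: tracking which residue classes mod $7$ are hit by the triangular and pentagonal exponents in the product $f_1^4 f_2^2$, and verifying that their combination in the $q^{7n+2}$ slot genuinely produces a multiple of $7$ rather than merely a miraculous-looking cancellation. An alternative, possibly slicker, route would be to avoid the freshman's-dream reduction and instead work directly with $f_2^2/f_1^3$, using the known $7$-dissection of $1/f_1$ (or of the relevant eta-quotient as a modular form of level $14$) and reading off the progression; but that requires having those dissection formulas in hand.

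Once the coefficient extraction is complete, the congruence $a(7n+2)\equiv 0 \pmod 7$ follows immediately, since multiplying by $1/f_7$ only permutes coefficients within fixed residue classes mod $7$ and therefore cannot move the vanishing $q^{7n+2}$-coefficient out of its class.
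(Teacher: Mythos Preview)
Your reduction $\sum a(n)q^n \equiv f_1^4 f_2^2/f_7 \pmod 7$ via the freshman's dream is correct and is essentially the paper's opening move as well. The gap is in the dissection step. Writing $f_1^4 f_2^2 = f_1 \cdot f_1^3 \cdot f_2 \cdot f_2$ and tracking residue classes does \emph{not} settle the matter by bookkeeping alone: the pentagonal exponents in $f_1$ hit $\{0,1,2,5\}\pmod 7$ with coefficients $\pm 1$ (so nothing drops out), those in $f_2$ hit $\{0,2,3,4\}$, and $f_1^3$ modulo $7$ contributes $\{0,1,3\}$. Sums of these cover residue $2$ in many ways, so you would need a genuine cancellation argument that you have not supplied. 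The hoped-for ``explicit factor of $7$ from $(2k+1)$'' only kills a single residue class in $f_1^3$; it does not propagate to the product.

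The paper closes this gap with a device you are missing: replace $q$ by $q^2$ \emph{before} applying the freshman's dream, so that the numerator becomes $f_2^4 f_4^2$ and the target exponent becomes $2(7n+2)\equiv 4\pmod 7$. The point of bringing in $f_4$ is that now one has the exact factorization
\[
f_2^4 f_4^2 \;=\; \frac{f_1^2 f_4^2}{f_2}\cdot\frac{f_2^5}{f_1^2},
\]
and each factor is a \emph{single} theta series $\sum_{n\in\mathbb{Z}}(\pm 1)^n(3n+1)q^{n(3n+2)}$. The exponents $n(3n+2)$ lie in $\{0,1,2,5\}\pmod 7$, and the class $\equiv 2$ forces $3n+1\equiv 0\pmod 7$; hence modulo $7$ each factor is supported on residues $\{0,1,5\}$, and pairwise sums of these never equal $4$. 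That finishes the proof by pure residue bookkeeping. Without the $q\mapsto q^2$ substitution (or some other identity you have not named), there is no comparable decomposition of $f_1^4 f_2^2$ into two lacunary pieces with only three residue classes each, and your outlined approach remains a hope rather than a proof.
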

We note that Amdeberhan and Merca prove Theorem \ref{a_72mod7} as a corollary of the following theorem which they prove using the Mathematica package RaduRK of Smoot \cite{Smoot}:  
\begin{theorem}
\label{a_7dissection}
We have 
\begin{align*}
        &\sum_{n=0}^\infty a(7n+2)\, q^n \\
        &= 7\left(\frac{1024\,f_2^8\,f_{14}^{18}}{f_1^{20}\,f_7^7}\,q^8 
        +\frac{1344\,f_2^9\,f_{14}^{11}}{f_1^{21}}\,q^6
        -\frac{1024\,f_2^{16}\,f_{14}^{10}}{f_1^{24}\, f_7^3}\,q^5 
        +\frac{72\,f_2^{10}\,f_7^7\,f_{14}^4}{f_1^{22}}\,q^4  \right. \\
         &      \left.   \ \ \ \ \ \ \ 
         -\frac{320\,f_2^{17}\,f_7^4\,f_{14}^3}{f_1^{25}}\,q^3 
        -\frac{40\,f_2^{11}\,f_7^{14}}{f_1^{23}\,f_{14}^3}\,q^2  
        +\frac{56\,f_2^{18}\, f_7^{11}}{f_1^{26}\, f_{14}^4}\,q
        +\frac{f_2^{12}\,f_7^{21}}{f_1^{24}\,f_{14}^{10}} \right).
\end{align*}
\end{theorem}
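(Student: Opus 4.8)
The plan is to compute the full 7-dissection of the generating function $\dfrac{f_2^2}{f_1^3}$ directly and then read off the progression $n \equiv 2 \pmod 7$. Since $f_7$, $f_{14}$, $f_{49}$, and $f_{98}$ are power series in $q^7$, they behave as constants under the dissection and may be factored out freely, so the problem reduces to dissecting the ``primitive'' quotient built from $f_1$ and $f_2$. The first ingredient is the classical 7-dissection of $f_1$: by the pentagonal number theorem, $f_1 = \sum_n (-1)^n q^{n(3n-1)/2}$, and a direct check shows that $n(3n-1)/2$ is congruent to one of $0,1,2,5 \pmod 7$ and never to $3,4,6$. Collecting terms by residue writes $f_1 = f_{49}\,\Omega(q)$, where $\Omega(q) = \omega_0 - q\,\omega_1 - q^2\,\omega_2 + q^5\,\omega_5$ and each $\omega_r$ is an explicit theta quotient in $q^7$ produced by the Jacobi triple product. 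Replacing $q$ by $q^2$ (i.e.\ using $f_2 = f_1(q^2)$) gives the companion dissection $f_2 = f_{98}\,\Omega(q^2)$, whose four surviving residues are $0,2,3,4 \pmod 7$.

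Substituting these gives $\dfrac{f_2^2}{f_1^3} = \dfrac{f_{98}^2}{f_{49}^3}\cdot \dfrac{\Omega(q^2)^2}{\Omega(q)^3}$, with the prefactor a function of $q^7$. The crux is to rationalize the cube $\Omega(q)^3$ in the denominator. For this I would establish the norm identity: letting $\zeta$ be a primitive seventh root of unity, the product $\prod_{j=0}^{6}\Omega(\zeta^j q)$ is symmetric under $q \mapsto \zeta q$ and hence a power series in $q^7$; in fact a short computation with $\prod_{j}(1-\zeta^{jn}q^n)$ shows $\prod_{j=0}^{6} f_1(\zeta^j q) = f_7^8/f_{49}$, so that the norm equals the explicit monomial $f_7^8/f_{49}^8$. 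Multiplying numerator and denominator by the cube of the six nontrivial conjugate factors $\prod_{j=1}^{6}\Omega(\zeta^j q)$ then replaces $\dfrac{1}{\Omega(q)^3}$ by a polynomial in the $\omega_r$ divided by $f_7^{24}/f_{49}^{24}$. This is precisely the mechanism behind Ramanujan's identity
\[
\sum_{n\ge 0} p(7n+5)\,q^n = 7\,\frac{f_7^3}{f_1^4} + 49\,q\,\frac{f_7^7}{f_1^8},
\]
and the same symmetric-function bookkeeping is what forces the overall factor of $7$ here.

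With the denominator rationalized, the remaining work is to expand the resulting polynomial in the $\omega_r(q)$ and $\omega_r(q^2)$, sort the monomials by the residue of their $q$-exponent modulo $7$, retain only those $\equiv 2 \pmod 7$, and then apply $q^7 \mapsto q$, reassembling the surviving theta products into the compact $\eta$-quotients $f_1, f_2, f_7, f_{14}$ via the Jacobi triple product in reverse. I expect the main obstacle to be exactly this final accounting: the conjugate cube unfolds into a large number of theta products, and verifying that they collapse onto precisely the eight stated monomials---with the correct signs and the integers $1024, 1344, \ldots, 1$---is where essentially all of the difficulty lies. The factor $7$ common to every term, which immediately yields Theorem \ref{a_72mod7}, emerges from the norm computation rather than from any separate argument.
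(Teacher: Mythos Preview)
The paper does not actually prove this theorem. It quotes the identity from Amdeberhan and Merca and records that they established it with Smoot's \texttt{RaduRK} package, an implementation of Radu's algorithm; the paper's own contribution is precisely to \emph{avoid} Theorem~\ref{a_7dissection} and prove the congruence $a(7n+2)\equiv 0\pmod 7$ by the much lighter argument in Section~\ref{sec:proofs}. So there is no hand-written argument here for you to compare against---only a certified machine computation.

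Your plan is the classical human route: 7-dissect $f_1$ via the pentagonal number theorem, rationalize $1/\Omega(q)^3$ using the norm identity $\prod_{j=0}^{6} f_1(\zeta^j q)=f_7^{8}/f_{49}$ (which you state correctly), expand, pick off the residue-$2$ part, and recombine into $\eta$-quotients. Every ingredient you cite is valid, and this method will in principle reproduce the identity. What your proposal does not yet contain is the part that carries all the weight: after rationalization the numerator is $\Omega(q^2)^2\cdot\bigl(\prod_{j=1}^{6}\Omega(\zeta^j q)\bigr)^{3}$, a polynomial in seven theta series with hundreds of monomials, and collapsing the residue-$2$ slice to exactly the eight listed $\eta$-quotients with the specific coefficients $1024,1344,\dots,1$ requires a chain of nontrivial theta relations among the $\omega_r$ (of the sort developed in \cite[Chapters~10 and~27]{Hir}). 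You acknowledge this yourself. Until that reduction is actually written out---or at least reduced to a finite verification you then perform---what you have is a sound strategy rather than a proof, and in practical terms it is no less computational than the \texttt{RaduRK} run it would replace.
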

While this result clearly implies Theorem \ref{a_72mod7}, the technique used to prove Theorem \ref{a_7dissection} is somewhat unsatisfying.  One of the primary goals of this work is to provide a truly elementary proof of Theorem \ref{a_72mod7}.  
(We note in passing that Guadalupe \cite{Gua1} has also provided an elementary proof of Theorem \ref{a_72mod7} using a result from Cooper, Hirschhorn, and Lewis \cite{CHL}.  The proof which we provide below is also elementary, and follows from standard generating function manipulations and theta function results.  Moreover, our proof fits within the context of the other congruence proofs that we will provide in this paper.) 

We now define an infinite family of functions $a_k(n)$ as the number of partitions of $n$ wherein even parts come in only one color, while the odd parts may be ``colored'' with one of $k$ colors for fixed $k\geq 1$.    Clearly, $a_1(n) = p(n)$, the unrestricted integer partition function described above, while $a_3(n) = a(n)$ of Amdeberhan and Merca.  
(We note, in passing, that $a_2(n) = \overline{p}(n)$, the number of overpartitions of weight $n$ \cite{CL, HS05}.)
Using \eqref{genfn2} as a template, we then know that 
\begin{equation}
\label{genfn_general}
\sum_{n=0}^\infty a_k(n)q^n = \frac{f_2^{k-1}}{f_1^k}.
\end{equation}
We can now state our main theorem.
\begin{theorem}
\label{thm:main} 
For all $n\geq 0$, 
\begin{align}
a_{1}(7n+5) & \equiv 0 \pmod{7}, \label{c1} \\
a_{3}(7n+2) & \equiv 0 \pmod{7}, \label{c3} \\
a_{4}(7n+4) & \equiv 0 \pmod{7}, \label{c4} \\
a_{5}(7n+6) & \equiv 0 \pmod{7}, \label{c5} \text{\ \ and}\\
a_{7}(7n+3) & \equiv 0 \pmod{7}. \label{c7} 
\end{align}
\end{theorem}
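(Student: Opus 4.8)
The plan is to treat all five congruences by a single mechanism that reduces each generating function modulo $7$ to an eta--quotient whose numerator can be expanded by classical theta identities. The starting point is the ``freshman's dream'' congruence $(1-q^n)^7\equiv 1-q^{7n}\pmod 7$, which gives $f_1^{7}\equiv f_7$ and $f_2^{7}\equiv f_{14}\pmod 7$. Applied to \eqref{genfn_general}, this yields, for each $k$,
\begin{equation*}
\sum_{n\ge 0}a_k(n)q^n=\frac{f_2^{k-1}}{f_1^{k}}=\frac{f_1^{7-k}f_2^{k-1}}{f_1^{7}}\equiv \frac{f_1^{7-k}f_2^{k-1}}{f_7}\pmod 7 .
\end{equation*}
Writing $g_k:=f_1^{7-k}f_2^{k-1}$, I note that $1/f_7$ is a power series in $q^{7}$ and so does not mix exponent residues modulo $7$; hence the coefficient of $q^{7n+r}$ in $g_k/f_7$ is a $\mathbb{Z}$--linear combination of coefficients $[q^{m}]g_k$ with $m\equiv r\pmod 7$. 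Consequently it suffices to show that, for the relevant residue $r_k$, every coefficient $[q^{7m+r_k}]g_k$ is divisible by $7$. The residues appearing in \eqref{c1}--\eqref{c7} are precisely the solutions of $24r_k+(k-2)\equiv 0\pmod 7$, which is the arithmetic signature attached to the $q$--power shift of $\eta(2\tau)^{k-1}/\eta(\tau)^{k}$, and this is what singles out exactly one progression for each $k$.

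For $k\in\{1,4,7\}$ the numerator is a product of cubes, which I would expand by Jacobi's identity $f_1^{3}=\sum_{a\ge 0}(-1)^{a}(2a+1)q^{a(a+1)/2}$ and its image under $q\mapsto q^{2}$: thus $g_1=(f_1^{3})^{2}$, $g_4=f_1^{3}f_2^{3}$, and $g_7=(f_2^{3})^{2}$. Reducing the triangular exponents modulo $7$, one checks that $a(a+1)/2$ occupies only the residues $\{0,1,3,6\}$, and that the target residue $r_k$ can be represented only when some summation index is $\equiv 3\pmod 7$; for such an index the factor $2a+1\equiv 0\pmod 7$, so every surviving term is divisible by $7$. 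This settles \eqref{c1} and \eqref{c4} term by term. Moreover $g_7(q)=g_1(q^{2})$, so \eqref{c7} follows at once from \eqref{c1} by matching $7m+3$ (with $m$ odd) to a progression $7\ell+5$; in particular \eqref{c1} is exactly Ramanujan's congruence $p(7n+5)\equiv 0\pmod 7$ recovered in this language.

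The two remaining cases, \eqref{c3} and \eqref{c5}, are the crux, since $g_3=f_1^{4}f_2^{2}$ and $g_5=f_1^{2}f_2^{4}$ are not products of cubes. Here I would push the same idea one step further, using $f_2^{7}\equiv f_{14}$ to write, for example,
\begin{equation*}
g_3=f_1^{4}f_2^{2}\equiv \frac{(f_1^{3})^{6}(f_2^{3})^{3}}{f_7^{2}\,f_{14}}\pmod 7 ,
\end{equation*}
and analogously for $g_5$, so that after clearing the $q^{7}$-- and $q^{14}$--power denominators the numerator is again a product of Jacobi cubes ready for expansion. The essential difficulty---and the main obstacle---is that for these residues the vanishing is no longer term by term: residue $2$ (respectively $6$) can be reached with all summation indices avoiding $3\pmod 7$, so one cannot conclude from a single factor $2a+1\equiv 0$. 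Instead I expect to group the expansion according to the residue classes carried by $f_1^{3}$ (which already annihilates the class $\equiv 6\pmod 7$) and then verify that the surviving inner sum over the complementary indices cancels modulo $7$; equivalently, one may invoke the classical $7$--dissection of $f_1$ through the pentagonal number theorem (whose exponents occupy only the residues $\{0,1,2,5\}\pmod 7$) together with that of $\psi(q)=f_2^{2}/f_1$, reducing \eqref{c3} and \eqref{c5} to a short check that the coefficient extracted in the distinguished progression is a multiple of $7$. Carrying out this cancellation cleanly, rather than term by term, is the step that will demand the most care.
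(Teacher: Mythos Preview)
Your treatment of \eqref{c1}, \eqref{c4}, and \eqref{c7} is correct and matches the paper's method in spirit. For \eqref{c4} you in fact take a slightly cleaner route than the paper: you expand $f_1^{3}f_2^{3}$ directly via Jacobi's identity on each factor, and the residue arithmetic shows that the only way to hit exponent $\equiv 4\pmod 7$ forces both indices $\equiv 3\pmod 7$, so both factors $2a+1$ vanish. The paper instead writes $f_1^{3}f_2^{3}=\dfrac{f_1^{5}}{f_2^{2}}\cdot\dfrac{f_2^{5}}{f_1^{2}}$ and invokes two series identities from Hirschhorn's Chapter~10; your version avoids this. Your observation that $g_7(q)=g_1(q^{2})$ makes \eqref{c7} a corollary of \eqref{c1} is also a nice shortcut the paper does not exploit.

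The gap is in \eqref{c3} and \eqref{c5}. You correctly diagnose that the Jacobi-cube approach via $(f_1^{3})^{6}(f_2^{3})^{3}/(f_7^{2}f_{14})$ does not give term-by-term vanishing, and then you leave the argument as an expectation (``I expect to group the expansion\ldots'', ``reducing\ldots to a short check''). This is precisely where the paper supplies an idea you are missing: rather than products of Jacobi cubes, it uses three single-series identities,
\[
\frac{f_1^{5}}{f_2^{2}}=\sum_{n\in\mathbb{Z}}(6n+1)q^{n(3n+1)/2},\qquad
\frac{f_1^{2}f_4^{2}}{f_2}=\sum_{n\in\mathbb{Z}}(3n+1)q^{n(3n+2)},\qquad
\frac{f_2^{5}}{f_1^{2}}=\sum_{n\in\mathbb{Z}}(-1)^{n}(3n+1)q^{n(3n+2)},
\]
each of which, modulo~$7$, is supported on only three residue classes (the fourth being killed by the linear coefficient). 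For \eqref{c5} one writes $f_1^{2}f_2^{4}=\dfrac{f_2^{5}}{f_4^{2}}\cdot\dfrac{f_1^{2}f_4^{2}}{f_2}$; for \eqref{c3} the paper first substitutes $q\mapsto q^{2}$, turning the numerator into $f_2^{4}f_4^{2}=\dfrac{f_1^{2}f_4^{2}}{f_2}\cdot\dfrac{f_2^{5}}{f_1^{2}}$. In each case the product of two three-class series misses the target residue entirely, and the proof is again a one-line residue check. Your proposed alternative via pentagonal numbers and $\psi(q)$ could in principle be made to work, but as stated it is not a proof; the paper's use of these Chapter~10 identities is exactly the missing ingredient that replaces your anticipated ``cancellation'' by structural vanishing.
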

\noindent 
Note that Theorem \ref{a_72mod7} is \eqref{c3} above.  

As an aside, we briefly highlight the work of Srinivasa Ramanujan on congruence properties satisfied by the partition function $p(n)$ \cite{Ram1919}.  In his groundbreaking work, Ramanujan proved that, for all $n\geq 0$, 
\begin{align}
p(5n+4) &\equiv 0 \pmod{5}, \notag\\
p(7n+5) &\equiv 0 \pmod{7}, \text{\ \ and} \label{RamCongMod7}\\
p(11n+6) &\equiv 0 \pmod{11}.  \notag
\end{align}
(Given the comments above, we see immediately from \eqref{RamCongMod7} that \eqref{c1} holds.)
Readers who are interested in elementary proofs of Ramanujan's congruence results for $p(n)$ mentioned above, as well as generalizations, may wish to consult the work of the first author \cite{Hir}.

In Section \ref{sec:tools}, we collect the tools necessary for proving Theorem \ref{thm:main}.  In Section \ref{sec:proofs}, we then prove Theorem \ref{thm:main}.  All of our proofs are elementary and follow from classic results in $q$--series along with straightforward generating function manipulations.  

\section{Necessary Tools}
\label{sec:tools}
The set of tools that we require to complete each of our proofs is rather small.  First, we require the following well--known classical $q$--series identity.  
\begin{lemma}[Jacobi]
\label{lem:JacobiCube}
We have 
\begin{align*}
f_1^3 
&= \sum_{k=0}^\infty (-1)^k(2k+1)q^{k(k+1)/2} \\
&\equiv \mathcal{J}_0(q^7) + q\mathcal{J}_1(q^7) + q^3\mathcal{J}_3(q^7) \pmod{7}
\end{align*}
where $\mathcal{J}_i$, $i=0,1,3$, are power series with integer coefficients.  
\end{lemma}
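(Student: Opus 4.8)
The plan is to treat the two displayed lines of the lemma separately. The first line is the classical Jacobi identity for the cube of Euler's function; I would either invoke it directly as a known result or, for self-containment, derive it from the Jacobi triple product in the form
\[
\sum_{n=-\infty}^\infty (-1)^n z^n q^{n(n-1)/2} \;=\; (q;q)_\infty\,(z;q)_\infty\,(q/z;q)_\infty.
\]
Writing the right side as $(q;q)_\infty(1-z)(zq;q)_\infty(q/z;q)_\infty$, dividing by $(1-z)$, and letting $z\to 1$ turns the right side into $f_1^3$, while the bilateral sum on the left, after pairing the index $n=k+1$ with $n=-k$ (which share the exponent $k(k+1)/2$), collapses to $\sum_{k\ge 0}(-1)^k(2k+1)q^{k(k+1)/2}$. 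I would not grind through this; it is standard and attributed to Jacobi.

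The substance of the lemma is the mod-$7$ reduction, which I would obtain by a dissection of the triangular-number exponents. Writing $T_k=k(k+1)/2$, the key elementary observation is the completion of the square
\[
8T_k+1 \;=\; 4k^2+4k+1 \;=\; (2k+1)^2 .
\]
Since $8\equiv 1\pmod 7$, this gives $T_k\equiv (2k+1)^2-1\pmod 7$. The squares modulo $7$ are exactly $\{0,1,2,4\}$, so the triangular numbers satisfy $T_k\equiv 6,0,1,3\pmod 7$; in particular the residues $2,4,5\pmod 7$ never occur among the exponents of the Jacobi sum.

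Next I would dispose of the residue class $6$. By the square-completion, $T_k\equiv 6\pmod 7$ forces $(2k+1)^2\equiv 0\pmod 7$, hence $2k+1\equiv 0\pmod 7$; but then the coefficient $(-1)^k(2k+1)$ appearing in the Jacobi sum is itself divisible by $7$, so all such terms vanish modulo $7$. Consequently, modulo $7$ the only surviving exponents lie in the classes $0,1,3\pmod 7$. Collecting the terms of $f_1^3$ according to the residue of their exponent and factoring out $q^0$, $q^1$, $q^3$ respectively yields power series $\mathcal{J}_0,\mathcal{J}_1,\mathcal{J}_3$ in $q^7$; integrality of their coefficients is automatic since $f_1^3$ has integer coefficients. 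This gives exactly $f_1^3\equiv \mathcal{J}_0(q^7)+q\mathcal{J}_1(q^7)+q^3\mathcal{J}_3(q^7)\pmod 7$.

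I expect the only real subtlety to be the handling of the class $6\pmod 7$: a priori it is populated by triangular numbers (for instance $T_3=6$), and the reason it disappears modulo $7$ is not that the exponent is absent but that its coefficient is divisible by $7$ precisely on that class. Recognizing that $7\mid(2k+1)$ exactly when $T_k\equiv 6\pmod 7$ is what makes the three-term dissection come out cleanly; everything else is routine bookkeeping.
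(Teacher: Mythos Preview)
Your proposal is correct and follows essentially the same approach as the paper: determine the residues of $k(k+1)/2$ modulo $7$, then observe that the residue class $6$ carries a coefficient $2k+1$ divisible by $7$. Your use of the completing-the-square identity $8T_k+1=(2k+1)^2$ together with $8\equiv 1\pmod 7$ is a slightly slicker packaging---it makes the link between the exponent residue $6$ and the vanishing coefficient automatic---whereas the paper simply checks directly that $T_k\equiv 6\pmod 7$ iff $k\equiv 3\pmod 7$; but the underlying argument is the same.
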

\begin{proof}
For an elementary proof of the equality above, see \cite[(1.7.1)]{Hir}.   To obtain the congruence statement above, we follow the argument in \cite[Section 3.4]{Hir}.  Note that 
$$
\frac{k(k+1)}{2} \equiv 0,1,3,6 \pmod{7}
$$ and $k(k+1)/2 \equiv 6 \pmod{7}$  if and only if $k\equiv 3 \pmod{7}$.  In this case, 
the coefficient $2k+1 \equiv 0 \pmod{7}.$  Hence, modulo 7, the only terms that remain must be of the form $q^{7j}, q^{7j+1}$, or $q^{7j+3}$ for some nonnegative integer $j$.  The congruence above follows.  
\end{proof}
\begin{remark}
Note that $\mathcal{J}_0(q^7),  q\mathcal{J}_1(q^7),$ and $q^3\mathcal{J}_3(q^7)$ are the same as $J_0, J_1,$ and $J_3$ in \cite[(3.4.1)]{Hir}.
\end{remark}
We also require three results that appear in \cite[Chapter 10]{Hir}.  
\begin{lemma}
\label{lem:Hir10_identities}
We have 
\begin{align}
\frac{f_1^5}{f_2^2} 
&= \sum_{n=-\infty}^\infty (6n+1)q^{n(3n+1)/2} \notag \\
&\equiv \mathcal{A}_0(q^7) + q\mathcal{A}_1(q^7)+q^5\mathcal{A}_5(q^7) \pmod{7}, \label{Hir10_eqn1}  \\
\frac{f_1^2f_4^2}{f_2} &= \sum_{n=-\infty}^\infty (3n+1)q^{n(3n+2)} \notag \\
&\equiv \mathcal{B}_0(q^7) + q\mathcal{B}_1(q^7)+q^5\mathcal{B}_5(q^7) \pmod{7}, \text{\ \ and} \label{Hir10_eqn2} \\
\frac{f_2^5}{f_1^2} 
&= \sum_{n=-\infty}^\infty (-1)^n(3n+1)q^{n(3n+2)} \notag \\
&\equiv \mathcal{C}_0(q^7) + q\mathcal{C}_1(q^7)+q^5\mathcal{C}_5(q^7) \pmod{7}, \label{Hir10_eqn3}
\end{align}
where each $\mathcal{A}_i$, $\mathcal{B}_i$, and $\mathcal{C}_i$ is a power series with integer coefficients.  
\end{lemma}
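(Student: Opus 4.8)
My plan is to treat each of the three lines in two stages: first record the exact theta-function equalities (the $=$ signs), and then extract the stated reductions modulo $7$ (the $\equiv$ signs) by an argument identical in spirit to the one used for Lemma~\ref{lem:JacobiCube}.

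For the equalities, I would simply invoke the classical eta-quotient/theta identities collected in \cite[Chapter 10]{Hir}, namely the representations of $f_1^5/f_2^2$ as $\sum_{n=-\infty}^\infty (6n+1)q^{n(3n+1)/2}$, of $f_1^2 f_4^2/f_2$ as $\sum_{n=-\infty}^\infty (3n+1)q^{n(3n+2)}$, and of $f_2^5/f_1^2$ as $\sum_{n=-\infty}^\infty (-1)^n(3n+1)q^{n(3n+2)}$. These are standard and require no fresh argument here, so I would cite them and move on.

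For the congruences, the key observation is that in each sum the exponent, reduced modulo $7$, depends only on $n \bmod 7$ (using $2^{-1}\equiv 4 \pmod 7$ to handle the halved exponent in the first case), and that in each case exactly one residue class of $n$ produces an exponent outside $\{0,1,5\}\pmod 7$ while simultaneously forcing the coefficient to vanish modulo $7$. Concretely, for \eqref{Hir10_eqn1} one checks that $n(3n+1)/2 \equiv 2 \pmod 7$ precisely when $n \equiv 1 \pmod 7$, and there the coefficient $6n+1 \equiv 0 \pmod 7$; the remaining six residue classes of $n$ yield exponents $\equiv 0,1,5 \pmod 7$, which I would group into the three series $\mathcal{A}_0(q^7)$, $q\mathcal{A}_1(q^7)$, and $q^5\mathcal{A}_5(q^7)$. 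For both \eqref{Hir10_eqn2} and \eqref{Hir10_eqn3} the exponent $n(3n+2) \equiv 2 \pmod 7$ exactly when $n \equiv 2 \pmod 7$; writing $n = 7m+2$ gives $3n+1 = 7(3m+1)$, so the coefficient $3n+1$ (respectively $(-1)^n(3n+1)$) is divisible by $7$ there, and the surviving exponents again lie in $\{0,1,5\}\pmod 7$, giving the claimed groupings with the $\mathcal{B}_i$ and $\mathcal{C}_i$.

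I do not anticipate a genuine obstacle, as the computation is entirely routine once the exact identities are in hand. The one point meriting slight care is \eqref{Hir10_eqn3}, where the coefficient $(-1)^n(3n+1)$ depends on the parity of $n$ and not on $n \bmod 7$ alone; this causes no difficulty, since the vanishing modulo $7$ comes entirely from the factor $3n+1$, which is $\equiv 0 \pmod 7$ whenever $n \equiv 2 \pmod 7$, independently of the sign.
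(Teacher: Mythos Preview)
Your proposal is correct and follows essentially the same approach as the paper: cite the exact identities from \cite[Chapter~10]{Hir}, then check that the exponents modulo~$7$ lie in $\{0,1,2,5\}$ with the residue~$2$ occurring precisely for $n\equiv 1\pmod 7$ (first case) or $n\equiv 2\pmod 7$ (second and third cases), where the coefficient vanishes modulo~$7$. Your extra remark about the sign $(-1)^n$ in \eqref{Hir10_eqn3} is a nice clarifying touch that the paper leaves implicit.
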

\begin{proof}
For proofs of the identities above, see \cite[(10.7.3), (10.7.6), and (10.7.7)]{Hir}, respectively.  

To see the congruences, note first that, for any integer $n$, 
$$n(3n+1)/2 \equiv 0,1,2,5 \pmod{7}.$$  Moreover, $n(3n+1)/2 \equiv 2 \pmod{7}$  if and only if $n\equiv 1 \pmod{7}$.  In this case, 
the coefficient $6n+1 \equiv 0 \pmod{7}.$  Hence, modulo 7, the only terms that remain must be of the form $q^{7j}, q^{7j+1}$, or $q^{7j+5}$ for some nonnegative integer $j$.   This yields the congruence in \eqref{Hir10_eqn1}.  Next, note that, for any integer $n$, 
$$n(3n+2) \equiv 0,1,2,5 \pmod{7}.$$
Moreover, $n(3n+2) \equiv 2 \pmod{7}$  if and only if $n\equiv 2 \pmod{7}$.  In this case, 
the coefficient $3n+1 \equiv 0 \pmod{7}.$  Hence, modulo 7, the only terms that remain must be of the form $q^{7j}, q^{7j+1}$, or $q^{7j+5}$ for some nonnegative integer $j$.   This implies the congruences in both \eqref{Hir10_eqn2} and \eqref{Hir10_eqn3}.  
\end{proof}
To close this section, we note a pivotal congruence result which follows from the Binomial Theorem and congruence properties of certain binomial coefficients.  
\begin{lemma} 
\label{lem:FD} 
For a prime $p$ and positive integers $a$ and $b$, we have 
$$
f_a^{bp} \equiv f_{ap}^b \pmod{p}.  
$$
\end{lemma}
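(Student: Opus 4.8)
The plan is to prove the congruence $f_a^{bp} \equiv f_{ap}^b \pmod p$ by unwinding the definition $f_k = (q^k;q^k)_\infty = \prod_{m=1}^\infty (1 - q^{km})$ and applying the Frobenius/Freshman's-dream identity for power series with integer coefficients, namely that $P(q)^p \equiv P(q^p) \pmod p$ for any $P \in \mathbb{Z}[[q]]$. This underlying fact is itself a consequence of the Binomial Theorem: for a prime $p$, the multinomial coefficients $\binom{p}{i_1,\dots,i_r}$ are divisible by $p$ unless one $i_j$ equals $p$ and the rest vanish, so raising a sum to the $p$-th power kills all cross terms modulo $p$ and leaves $\left(\sum_m c_m q^m\right)^p \equiv \sum_m c_m^p q^{mp} \equiv \sum_m c_m q^{mp} \pmod p$, the last step using Fermat's little theorem $c_m^p \equiv c_m \pmod p$ on the integer coefficients.

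The key steps, in order, are as follows. First I would reduce to the single-factor statement by writing $f_a^{bp} = \bigl(f_a^p\bigr)^b$, so it suffices to show $f_a^p \equiv f_{ap} \pmod p$ and then raise both sides to the $b$-th power (congruences modulo $p$ are preserved under multiplication of power series). Second, I would apply the Freshman's-dream identity to the single Pochhammer product: since $f_a = \prod_{m\geq 1}(1 - q^{am})$ has integer coefficients, we get
\begin{equation*}
f_a^p = \left(\prod_{m=1}^\infty (1 - q^{am})\right)^p \equiv \prod_{m=1}^\infty (1 - q^{am})^p \equiv \prod_{m=1}^\infty (1 - q^{amp}) \pmod p,
\end{equation*}
where the first congruence distributes the $p$-th power over the (convergent in the $q$-adic topology) product term by term, and the second applies $(1 - x)^p \equiv 1 - x^p \pmod p$ to each factor with $x = q^{am}$. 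Third, I would recognize the final product as exactly $\prod_{m\geq 1}(1 - q^{(ap)m}) = f_{ap}$, completing the single-factor claim, and then take $b$-th powers to conclude.

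The main obstacle is being careful that the elementary $(A+B)^p \equiv A^p + B^p \pmod p$ identity, valid for a two-term sum, is legitimately upgraded to an \emph{infinite} product of factors $(1 - q^{am})$. The cleanest justification is that each congruence is understood coefficient-wise in $\mathbb{Z}/p\mathbb{Z}[[q]]$: for any fixed power of $q$, only finitely many factors contribute, so one may truncate the product at any finite stage, apply the finite Freshman's dream to that polynomial, and observe that the discarded tail affects only higher-order coefficients. Thus no genuine analytic convergence issue arises—only the formal bookkeeping that the ring $\mathbb{Z}/p\mathbb{Z}[[q]]$ is complete with respect to the $q$-adic topology, so term-wise congruences assemble into a congruence of the full products. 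Once this point is handled, the remainder is a direct verification with no residual difficulty.
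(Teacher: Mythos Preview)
Your proof is correct and follows exactly the route the paper indicates: the paper does not give a detailed argument for this lemma but simply remarks that it ``follows from the Binomial Theorem and congruence properties of certain binomial coefficients,'' which is precisely the Freshman's-dream reduction $(1-x)^p \equiv 1-x^p \pmod{p}$ that you spell out in full. Your added care about passing from finite to infinite products via coefficient-wise truncation in $\mathbb{Z}/p\mathbb{Z}[[q]]$ is a welcome elaboration of a point the paper leaves implicit.
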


\section{Elementary Proof of Theorem \ref{thm:main} }
\label{sec:proofs}
We can now provide elementary proofs of each congruence in Theorem \ref{thm:main} thanks to the work completed in the previous section.  

\begin{proof}[Proof of Theorem \ref{thm:main}]

\ 

\medskip 

\noindent
Proof of \eqref{c1}.
Note that 
\begin{align*}
& 
\sum_{n=0}^\infty a_1(n)q^n =
\dfrac{1}{f_1} 
=
\dfrac{f_1^6}{f_1^7} 
\equiv
\dfrac{f_1^6}{f_7}  \pmod{7} \text{\ \ from Lemma \ref{lem:FD}} \\
&=
\dfrac{(f_1^3)^2}{f_7} \\
&\equiv 
\dfrac{\left( \mathcal{J}_0(q^7) + q\mathcal{J}_1(q^7) + q^3\mathcal{J}_3(q^7) \right)^2}{f_7}
\pmod{7}
\end{align*}
using Lemma \ref{lem:JacobiCube}.  
Note that it is not possible to obtain a term of the form $q^{7n+5}$ when the last expression above is expanded.  Congruence \eqref{c1} follows.  

As noted earlier, this particular congruence is simply a restatement of Ramanujan's congruence stated in \eqref{RamCongMod7}.  We include the proof above since it fits naturally with the other proofs we provide here.    

\medskip 

\noindent
Proof of \eqref{c3}.
Since 
$$
\sum_{n=0}^\infty a_3(n)q^n
=
\dfrac{f_2^2}{f_1^3}, 
$$
we know that replacing $q$ by $q^2$ immediately yields 
\begin{align*}
&\sum_{n=0}^\infty a_3(n)q^{2n}
=
\dfrac{f_4^2}{f_2^3}
=
\dfrac{f_2^4f_4^2}{f_2^7} 
\\
&\equiv
\dfrac{f_2^4f_4^2}{f_{14}} \pmod{7}  \\ 
&\equiv 
\dfrac{  \left( \mathcal{B}_0(q^7) + q\mathcal{B}_1(q^7)+q^5\mathcal{B}_5(q^7) \right)  \left( \mathcal{C}_0(q^7) + q\mathcal{C}_1(q^7)+q^5\mathcal{C}_5(q^7) \right)     }{f_{14}} \pmod{7}
\end{align*}
using \eqref{Hir10_eqn2} and \eqref{Hir10_eqn3}.  
It is not possible to obtain a term of the form $q^{2(7n+2)}= q^{7(2n)+4}$ in the last expression above, so \eqref{c3} follows.

\medskip 

\noindent
Proof of \eqref{c4}.
We see that 
\begin{align*}
&
\sum_{n=0}^\infty a_4(n)q^n =
\dfrac{f_2^3}{f_1^4} 
=
\dfrac{f_1^3f_2^3}{f_1^7} 
\equiv 
\dfrac{f_1^3f_2^3}{f_7} \pmod{7}   \\ 
&\equiv
\dfrac{ \left(\mathcal{A}_0(q^7) + q\mathcal{A}_1(q^7)+q^5\mathcal{A}_5(q^7)\right) \left(\mathcal{C}_0(q^7) + q\mathcal{C}_1(q^7)+q^5\mathcal{C}_5(q^7)\right)  }{f_7} \pmod{7}
\end{align*}
again using \eqref{Hir10_eqn1} and \eqref{Hir10_eqn3}.   We cannot arrive at a term of the form $q^{7n+4}$ in the last expression above once this expression is expanded, so \eqref{c4} follows.

\medskip 

\noindent
Proof of \eqref{c5}.
In this case, we see that 
\begin{align*}
&
\sum_{n=0}^\infty a_5(n)q^n =
\dfrac{f_2^4}{f_1^5} 
=
\dfrac{f_1^2f_2^4}{f_1^7} 
\equiv
\dfrac{f_1^2f_2^4}{f_7}  \pmod{7}   \\ 
&\equiv 
\dfrac{ \left( \mathcal{A}_0(q^{14}) + q^2\mathcal{A}_1(q^{14})+q^{10}\mathcal{A}_5(q^{14}) \right)  \left( \mathcal{B}_0(q^7) + q\mathcal{B}_1(q^7)+q^5\mathcal{B}_5(q^7) \right) }{f_7} \pmod{7}
\end{align*}
using \eqref{Hir10_eqn1}  (with $q$ replaced by $q^2$) and \eqref{Hir10_eqn2}.  It is not possible to obtain a term of the form $q^{7n+6}$ in the last expression above, so \eqref{c5} follows.  

\medskip 

\noindent
Proof of \eqref{c7}.  
Lastly, 
\begin{align*}
& \sum_{n=0}^\infty a_7(n)q^n 
=
\dfrac{f_2^6}{f_1^7} 
=
\dfrac{(f_2^3)^2}{f_1^7} 
\equiv 
\dfrac{(f_2^3)^2}{f_7}  \pmod{7}   \\ 
&\equiv
\dfrac{\left( \mathcal{J}_0(q^{14}) + q^2\mathcal{J}_1(q^{14}) + q^6\mathcal{J}_3(q^{14}) \right)^2}{f_7} \pmod{7}
\end{align*}
from Lemma \ref{lem:JacobiCube} (with $q$ replaced by $q^2$).   Because we cannot obtain a term of the form $q^{7n+3}$ from the last expression above, we know that the congruence \eqref{c7} follows.  

\end{proof}

\section{Final Remarks}
We close this note with two sets of thoughts.  First, Theorem \ref{thm:main} can be easily extended to an infinite family of results in the following way.  
\begin{corollary}
\label{cor:infinite_families}
For all $j\geq 0$ and all $n\geq 0$, 
\begin{align*}
a_{7j+1}(7n+5) & \equiv 0 \pmod{7},  \\
a_{7j+3}(7n+2) & \equiv 0 \pmod{7}, \\
a_{7j+4}(7n+4) & \equiv 0 \pmod{7},  \\
a_{7j+5}(7n+6) & \equiv 0 \pmod{7}, \text{\ \ and}\\
a_{7j+7}(7n+3) & \equiv 0 \pmod{7}.  
\end{align*}
\end{corollary}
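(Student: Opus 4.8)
The plan is to induct on $j$, using each congruence in Theorem \ref{thm:main} as the base case $j=0$ and relating $a_{k+7}$ to $a_k$ through a single multiplicative factor. The starting point is the observation that, from \eqref{genfn_general},
\begin{equation*}
\sum_{n=0}^\infty a_{k+7}(n)q^n = \frac{f_2^{k+6}}{f_1^{k+7}} = \frac{f_2^{k-1}}{f_1^{k}}\cdot\frac{f_2^7}{f_1^7} = \left(\sum_{n=0}^\infty a_k(n)q^n\right)\frac{f_2^7}{f_1^7}.
\end{equation*}
Applying Lemma \ref{lem:FD} with $p=7$ twice gives $f_1^7\equiv f_7$ and $f_2^7\equiv f_{14}\pmod 7$, so that
\begin{equation*}
\sum_{n=0}^\infty a_{k+7}(n)q^n \equiv \left(\sum_{n=0}^\infty a_k(n)q^n\right)\frac{f_{14}}{f_7}\pmod 7.
\end{equation*}

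Next I would record the one structural fact that drives the induction: the series $f_{14}/f_7$ is a power series in $q^7$, say $\sum_{m\geq 0} c_m q^{7m}$ with $c_m\in\mathbb{Z}$. Multiplying by a power series in $q^7$ cannot move a term out of its residue class modulo $7$ in the exponent. Concretely, for a fixed residue $r\in\{0,1,\dots,6\}$, the coefficient of $q^{7n+r}$ in the product above is $\sum_{m=0}^n a_k(7(n-m)+r)\,c_m$. Hence if $a_k(7i+r)\equiv 0\pmod 7$ for every $i\geq 0$, then $a_{k+7}(7n+r)\equiv 0\pmod 7$ for every $n\geq 0$.

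With this in hand the corollary follows immediately by induction on $j$. For each of the five families the relevant pair $(k,r)$ is one of $(1,5)$, $(3,2)$, $(4,4)$, $(5,6)$, $(7,3)$, and the base case $j=0$ is precisely the corresponding congruence \eqref{c1}, \eqref{c3}, \eqref{c4}, \eqref{c5}, or \eqref{c7} of Theorem \ref{thm:main}. The inductive step replaces $k$ by $k+7$ using the displayed implication while leaving $r$ unchanged, which advances $j$ to $j+1$. I do not anticipate a genuine obstacle here: the entire content lies in the single observation that $f_{14}/f_7$ involves only powers of $q^7$, together with Lemma \ref{lem:FD}. The only point requiring a moment's care is verifying that convolution with $\sum_{m\geq 0} c_m q^{7m}$ respects residue classes modulo $7$, which is the elementary coefficient computation recorded above.
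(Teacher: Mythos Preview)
Your proof is correct and follows essentially the same idea as the paper: the paper factors $\sum a_{7j+k}(n)q^n$ directly as $(f_2^{7j}/f_1^{7j})\cdot(f_2^{k-1}/f_1^k)\equiv (f_{14}^j/f_7^j)\sum a_k(n)q^n\pmod 7$ in one step, while you reach the same conclusion by peeling off one factor of $f_2^7/f_1^7\equiv f_{14}/f_7$ at a time via induction on $j$. The key observation in both cases is that the extra factor is a power series in $q^7$, so residue classes modulo $7$ in the exponents are preserved.
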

\begin{proof}
Note that for any $j$ and $k$, 
\begin{align*}
\sum_{n=0}^\infty a_{7j+k}(n)q^n
&= 
\frac{f_2^{7j+k-1}}{f_1^{7j+k}} \\
&= 
\frac{f_2^{7j}}{f_1^{7j}}\cdot \frac{f_2^{k-1}}{f_1^{k}} \\
&\equiv 
\frac{f_{14}^{j}}{f_7^{j}}\cdot \frac{f_2^{k-1}}{f_1^{k}} \pmod{7} \\
&= 
\frac{f_{14}^{j}}{f_7^{j}}\sum_{n=0}^\infty a_{k}(n)q^n.
\end{align*}
The result then follows thanks to Theorem \ref{thm:main} and the fact that $\frac{f_{14}^{j}}{f_7^{j}}$ is a function of $q^7$.  
\end{proof}
While Corollary \ref{cor:infinite_families} may not seem impressive given the ease with which it is proved, the fact remains that this corollary provides an infinite collection of congruences modulo 7 for this family of functions.  Moreover, it is not obvious (a priori) from the definition of the partitions in question that such divisibilities modulo 7 will persist as the number of colors on the odd parts increases without bound.  

Second, we note that one might ask what happens when the role of the even parts and the odd parts is switched, that is, if the odd parts are monochromatic and the even parts can appear in $k$ different colors.  In that case, the generating function is given by 
$$
\frac{(q;q^2)_\infty^{k-1}}{(q;q)_\infty^k} = \frac{(q;q)_\infty^{k-1}}{(q;q)_\infty^k(q^2;q^2)_\infty^{k-1}} = \frac{1}{(q;q)_\infty (q^2;q^2)_\infty^{k-1}}.
$$ 
This family of functions has been studied recently; see the work of Amdeberhan, Sellers, and Singh \cite{AmSeSi} as well as subsequent work of Guadalupe \cite{Gua2} and Das, Maity, and Saikia \cite{DMS}.  Indeed, the case $k=2$ of the above family of functions was studied in 2010 by Chan \cite{HCC} who developed a cubic analog of Ramanujan's most beautiful identity.

\end{document}